\documentclass[11pt]{article}
\usepackage[T1]{fontenc}
\usepackage{lmodern}
\usepackage{amsmath,amssymb,amsthm}
\usepackage{graphicx}
\usepackage[margin=1in]{geometry}
\usepackage{microtype}
\usepackage[hidelinks]{hyperref}

\newtheorem{theorem}{Theorem}[section]
\newtheorem{lemma}[theorem]{Lemma}
\newtheorem{corollary}[theorem]{Corollary}
\theoremstyle{definition}
\newtheorem{problem}{Problem}
\DeclareMathOperator{\supp}{supp}
\DeclareMathOperator{\dist}{dist}
\DeclareMathOperator{\Int}{int}
\newcommand{\R}{\mathbb R}
\newcommand{\Z}{\mathbb Z}
\newcommand{\T}{\mathbb T}
\newcommand{\norm}[1]{\lVert #1\rVert}

\title{Regular closed gradient ranges and asymptotic gradient values}
\author{Tao Hu}
\date{}
\hypersetup{
  pdftitle={Regular closed gradient ranges and asymptotic gradient values},
  pdfauthor={Tao Hu},
  pdfkeywords={Gradient ranges, regular closed sets, asymptotic gradient values,
    Lagrangian submanifolds, symplectic homeomorphisms}
}

\begin{document}
\maketitle

\begin{abstract}
We prove that the gradient range of every nonzero compactly supported
$C^1$ function on $\R^n$ is the closure of its interior, for every $n\ge1$.
More generally, for an arbitrary $C^1$ function, the closure of its
gradient range is the union of the closure of the range's interior and
the set of asymptotic gradient values. We also prove that compact smooth
Lagrangian submanifolds without boundary in $\R^{2n}$ and their images
under symplectic homeomorphisms have regular closed momentum projections.
\end{abstract}

{\small
\noindent\textbf{Mathematics Subject Classification (2020).}
Primary 26B05; Secondary 53D12, 53D05.\par
\smallskip
\noindent\textbf{Keywords.}
Gradient ranges; regular closed sets; asymptotic gradient values;
\mbox{Lagrangian submanifolds}; symplectic homeomorphisms.\par
}

\section{Introduction}

For a real-valued function $f$ on $\R^n$, its support is
$\supp f=\overline{\{x:f(x)\ne0\}}$. We call $f$ a \emph{bump}
if it is nonzero and has compact support. For $f\in C^1(\R^n)$, write
\[
G_f=\{\nabla f(x):x\in\R^n\}.
\]
Which subsets of $\R^n$ can occur as $G_f$ when $f$ is a continuously
differentiable bump? The range is compact and connected, and the origin
belongs to its interior. It need not be convex. Early constructions of
bumps with prescribed gradient ranges arose in the study of generalized
derivatives; see Borwein, Borwein, and Wang \cite[Section~3.1]{BBW1996}.
Borwein, Fabian, Kortezov, and Loewen \cite{BFKL2001} subsequently
proved local restrictions on such ranges and constructed bumps realizing,
in particular, finite unions of strictly convex compact sets with
nonempty interiors, provided that the interior of the union is connected
and contains the origin.

A closed set is called \emph{regular closed} if it equals the closure of
its interior. The following question was raised in
\cite{BFKL2001} and restated by Hiriart-Urruty
\cite[Problem~4]{HiriartUrruty2007}.
It is also stated in Kol\'a\v{r} and Kristensen's preprint
\cite{KolarKristensen2002} and in Rifford \cite{Rifford2003}.

\begin{problem}\label{prob:gradient-range}
Let $n\ge1$ and let $f\in C^1(\R^n)$ be a bump. Is it true that
\[
G_f=\overline{\Int G_f}?
\]
\end{problem}

In dimension one the answer follows from the intermediate value
theorem. Gaspari \cite[Theorem~2.1]{Gaspari2002} proved the assertion for $C^2$ bumps
on the plane. Rifford \cite[Theorem~0.1]{Rifford2003} proved it for
$C^{n+1}$ bumps in arbitrary finite dimension, using a geometric argument
and a Sard-type refinement that he quotes from Federer. For the classical
critical-value theorem, see \cite{Sard1942}. The $C^2$ case in every
finite dimension follows from a theorem of Hartman and Nirenberg
\cite[Section~3, Theorem~I]{HartmanNirenberg1959}
and the inverse function theorem. In the plane, Kol\'a\v{r} and
Kristensen \cite{KolarKristensen2005} proved the conclusion when
$|\nabla f(x)-\nabla f(y)|\le\omega(|x-y|)$ for a modulus of continuity
$\omega$ satisfying $\omega(t)/\sqrt t\to0$ as $t\downarrow0$.
For planar $C^1$ bumps without an additional regularity assumption,
see Korobkov \cite[Corollary~1.3]{Korobkov2007}.

The following theorem answers Problem~\ref{prob:gradient-range}
affirmatively in every finite dimension, including $n\ge3$ under
the $C^1$ hypothesis alone.

\begin{theorem}\label{thm:main}
Let $n\ge1$, and let $f\in C^1(\R^n)$ be a bump. Then
\[
G_f=\overline{\Int G_f}.
\]
\end{theorem}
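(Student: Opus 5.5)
The plan is to show that every $p\in G_f$ is a limit of \emph{stable} gradient values, namely values $q$ for which some critical point of $g_q(x)=f(x)-q\cdot x$ is topologically robust under small changes of $q$. The basic stability principle is as follows. Suppose there is a bounded open set $U$ with
\[
\min_{\overline U} g_q<\min_{\partial U} g_q .
\]
Then for every $q'$ close to $q$ the same strict inequality holds for $g_{q'}$, because $g_{q'}-g_q=(q-q')\cdot x$ is uniformly small on $\overline U$. Hence $g_{q'}$ attains an interior minimum on $U$, so $q'\in G_f$. Thus such a $q$ lies in $\Int G_f$, and the same holds with maxima in place of minima.

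So the theorem reduces to showing that the set $S$ of such ``trapped'' values $q$ is dense in $G_f$. For $n=1$ this is the intermediate value argument. The bump hypothesis enters as follows: outside a ball $B_R\supset\supp f$ we have $g_q=-q\cdot x$, which is linear. Consequently every $q$ with $|q|$ small is trapped, with $U=B_R$, and this gives $0\in\Int G_f$.

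Next I fix $p=\nabla f(x_0)$ and $\varepsilon>0$, and argue by contradiction. Assume $B(p,\varepsilon)\cap S=\emptyset$. For every $q\in B(p,\varepsilon)$, no sublevel component of $g_q$ is then trapped in a bounded open set. I would convert this into a deformation statement: for such $q$, each bounded sublevel piece of $g_q$ can be pushed off to infinity, or more precisely out of the support, along a pseudo-gradient flow. This is where the $C^1$ regularity is delicate, since $\nabla g_q$ is only continuous and I cannot use Morse theory. Instead I would use only topological min/max arguments, in the style of mountain-pass or Lusternik--Schnirelmann on sublevel sets, which require just continuity of $g_q$.

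The aim is to derive a contradiction from a topological count. Compactify by periodizing: place $\supp f$ in the cube $[-R,R]^n$ and regard $f$ as a function on the torus $\T^n=\R^n/2R\Z^n$. Then $df-q\,dx$ is a closed $1$-form on $\T^n$ in the cohomology class $-q$. Trapped minima of $g_q$ correspond to critical points of this form that persist under perturbation of the class. The known result in the spirit of Lusternik--Schnirelmann and Novikov is that the form must have zeros whenever $q$ lies in the relevant region. I would try to localize this near $x_0$ by cutting off with a ball $B(x_0,\rho)$ on which $|\nabla f-p|<\varepsilon/2$. I would then use the fact that $f-p\cdot x$ restricted to $B(x_0,\rho)$, together with its linear comparison, forces a sublevel set of $g_q$, for some $q\in B(p,\varepsilon)$, to have a bounded connected component that is separated from its complement. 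Concretely, I would take $q=p+\delta e$ for a small $\delta$ and a well-chosen direction $e$, and examine the set where $g_q<g_q(x_0)$.

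The main obstacle is this last localization step. From the mere existence of a critical point $x_0$ of $g_p$, which may be degenerate and of saddle type, I need to produce a nearby value $q$ at which some critical set becomes topologically trapped. My first attempt would be an induction on $n$ via slicing. Restrict $f$ to lines or hyperplanes through $x_0$ and use the lower-dimensional result for the restricted function, which on a bounded slice behaves like a bump plus a linear term. This would produce one component of a trapped gradient, and I would then combine it with the transversal component using the stability principle above. If slicing fails to control the cross terms, the fallback is the symplectic viewpoint suggested by the abstract: treat $\operatorname{graph}\nabla f$ as an exact Lagrangian in $\R^{2n}$ and use the rigidity of its momentum projection.
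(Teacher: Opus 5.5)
Your stability principle is correct, and your treatment of $q$ near $0$ is the paper's final step. The gap is the reduction itself. For $n\ge3$ the set $S$ of min/max-trapped values need not be dense in $G_f$. So the hypothesis $B(p,\varepsilon)\cap S=\varnothing$ can actually hold, and no localization, slicing, or Lusternik--Schnirelmann argument can turn it into a contradiction.

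Here is an example. The standard birth of a cancelling pair of critical points (local model $x_1^3-tx_1+x_2^2-x_3^2$, spliced into a function without critical points) gives a smooth $u$ on $\R^3$ with $u-x_1$ compactly supported. Its only critical points are two nondegenerate ones, of Morse indices $1$ and $2$. Then $f=u-x_1$ is a bump and $-e_1\in G_f$, where $e_1=(1,0,0)$. Away from the two critical points, $|\nabla u|$ is bounded below. Hence for all $q$ near $-e_1$, the function $f-q\cdot x=u-(q+e_1)\cdot x$ has exactly two critical points, both nondegenerate saddles. A trapped $q$ would give an interior minimizer or maximizer on $\overline U$, hence a local extremum of $g_q$. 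So a whole neighbourhood of $-e_1$ misses $S$, even though it lies in $\Int G_f$: interior gradient values may be witnessed only by saddles.

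Any repair would have to detect robust saddles, via nonzero local degree or homological invariants, near an arbitrary and possibly degenerate critical point of a $C^1$ function. Your proposal offers no mechanism for this. The periodization does not supply one: the zeros of $df-q\,dx$ are tautologically the preimages of $q$, and nothing forces zeros in a nonzero class, as $-q\,dx$ itself shows.

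The missing idea is the one your fallback only names, and it is not variational. The paper first shows that every bounded open set meeting $\Gamma_f=\{(x,\nabla f(x))\}$ has positive displacement energy:
\begin{itemize}
\item Usher's theorem gives this for the compact Lagrangian $\T^n\times\{0\}$ in $\T^n\times\R^n$.
\item A symplectic embedding of $\R^{2n}$ carrying the zero section into $\T^n\times\{0\}$ transfers it to $\R^n\times\{0\}$.
\item Usher's invariance under the symplectic homeomorphism $(x,p)\mapsto(x,p+\nabla f(x))$ transfers it to $\Gamma_f$.
\end{itemize}
Now suppose $p_0\in G_f\setminus\overline{\Int G_f}$. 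Then $K=G_f\cap\overline{B(p_0,r)}$ is compact with empty interior. A mollified sum of shifted triangular waves gives a periodic $H$ with $\norm{H}_\infty<\varepsilon$ and $|\nabla H|>2R$ on $K$. The cut-off Hamiltonian $\alpha(x)\beta(p)H(p)$ acts as the shear $(x,p)\mapsto(x+\nabla H(p),p)$ on a fixed neighbourhood of $(x_0,p_0)$. It displaces that neighbourhood from $\Gamma_f$ with Hofer energy below $2\varepsilon$, a contradiction. This argument never inspects the type of critical points, which is why the saddle-only example does not affect it. As it stands, your proposal proves only the stability principle and $0\in\Int G_f$.
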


The argument gives a more general statement without any assumption on
support or growth. For $f\in C^1(\R^n)$, define
\begin{equation}\label{eq:asymptotic}
A_\infty(f)=\bigcap_{R>0}
\overline{\{\nabla f(x):|x|\ge R\}}.
\end{equation}
Thus $p\in A_\infty(f)$ precisely when there is a sequence $(x_k)$
with $|x_k|\to\infty$ and $\nabla f(x_k)\to p$. These are the
\emph{asymptotic values} of the gradient map. The set $A_\infty(f)$
is closed and may be empty.

\begin{theorem}\label{thm:asymptotic}
Let $n\ge1$ and let $f\in C^1(\R^n)$. Then
\[
\overline{G_f}=\overline{\Int G_f}\cup A_\infty(f).
\]
\end{theorem}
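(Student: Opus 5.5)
The inclusion ``$\supseteq$'' is immediate. $\overline{G_f}$ is closed and contains $\Int G_f$. It also contains $A_\infty(f)$, because every set in the intersection \eqref{eq:asymptotic} is contained in $\overline{G_f}$. For ``$\subseteq$'', take $p\in\overline{G_f}\setminus A_\infty(f)$. Since $A_\infty(f)$ is closed, there are $\delta>0$ and $R>0$ with $|\nabla f(x)-p|\ge 2\delta$ for $|x|\ge R$. Hence every $q\in G_f$ with $|q-p|<2\delta$ is attained only inside the compact ball $\overline{B}_R$, and consequently $G_f\cap B(p,\delta)$ is closed in $B(p,\delta)$. So $p$ itself lies in $G_f$, and it suffices to show that every $p$ with these properties lies in $\overline{\Int G_f}$. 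Replacing $f$ by $g(x)=f(x)-p\cdot x$ translates $G_f$ by $-p$. We may therefore assume $p=0$: $g$ has a critical point $x_0$, $|\nabla g|\ge2\delta$ outside $B_R$, and we must show that $\Int G_g$ meets $B(0,r)$ for every $r\in(0,\delta)$. Theorem~\ref{thm:main} then follows at once, since $A_\infty(f)=\{0\}\subset\Int G_f$ for a bump (the origin is interior, as noted in the introduction).

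We argue by contradiction. Suppose $B(0,r)\cap\Int G_g=\emptyset$, so that $N=G_g\cap B(0,r)$ is a closed, nowhere dense subset of $B(0,r)$ containing $0$. For $q\in B(0,r)$ put $g_q(x)=g(x)-q\cdot x$. Then $g_q$ is $C^1$, it has $|\nabla g_q|\ge\delta$ outside $B_R$, and it has critical points exactly when $q\in N$. Pick $q_1\in B(0,r)\setminus N$ close to $0$, and let $p_1$ be the first point of $N$ on the segment from $q_1$ to $0$. Then $g_{p_1}$ has a critical point $x_1\in B_R$, while $g_{q}$ has none for $q$ in the open half-segment between $p_1$ and $q_1$, and none on a small open ball around $q_1$.

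The key step is to contradict this with a variational construction that produces critical points for an open set of $q$. First I would try a localized minimization: for $q$ near $p_1$, minimize
\[
\Phi_{q}(x)=g(x)-q\cdot x+\tfrac{\lambda}{2}|x-a|^2
\]
over a ball around $x_1$, with the centre $a$ ranging in a small open set. At an interior minimizer $x$ one gets $\nabla g(x)=q-\lambda(x-a)$, so $G_g$ contains the image of the map $(q,a)\mapsto q-\lambda(x_{q,a}-a)$. The aim is to show that this image contains an open set near $0$, using the inequality $|\nabla g|\ge\delta$ off $B_R$ to keep minimizers away from the boundary. Failing that, I would use deformation theory. The uniform bound $|\nabla g_q|\ge\delta$ off a compact set gives a Palais--Smale-type condition, so for noncritical $q$ all sublevel sets of $g_q$ are deformation equivalent. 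One then compares the sublevel topology of $g_{q}$ on the two sides of the critical parameter $p_1$, using density of noncritical parameters near $p_1$.

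I expect this step to be the main obstacle: a $C^1$ critical point can be degenerate and topologically invisible, which is exactly why the $C^1$ case resisted Sard-type methods. Handling it will require exploiting the freedom in the linear parameter $q$ (equivalently, the Lagrangian graph of $dg$) rather than the single function $g$.
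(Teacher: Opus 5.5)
Your reduction is correct and matches the paper's. From $p\notin A_\infty(f)$ you get $\delta,R$ with $|\nabla f(x)-p|\ge 2\delta$ for $|x|\ge R$. Hence the part of the graph over $\overline{B(p,\delta)}$ is compact, $p\in G_f$, and everything reduces to showing $p\in\overline{\Int G_f}$. \textbf{But that step is the whole content of the theorem, and the proposal does not prove it.} After setting up the contradiction hypothesis and the segment from $q_1$ to $p_1$ (which is never used), you only describe two strategies and concede that the obstacle remains. Neither works as described:
\begin{itemize}
\item \emph{Localized minimization.} If the ball around $x_1$ is small, the construction sees only the behaviour of $g$ near $x_1$, but the conclusion is not local. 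If $g$ is affine near $x_1$, every interior minimizer of $\Phi_{q,a}$ gives the same value $q-\lambda(x_{q,a}-a)=\nabla g(x_{q,a})=p_1$. This is why the paper insists on global rather than local rigidity. If the ball is large, the values $q-\lambda(x_{q,a}-a)$ are essentially the gradients of the semiconcave function $e(a)=\min_x\Phi_{q,a}(x)$, shifted by $q$. The minimizer map is multivalued and discontinuous, and showing that such a gradient image has interior is a problem of the same kind as the original one.
\item \emph{Deformation theory.} There is nothing to compare. For every $q$ in the dense open set $B(0,r)\setminus N$, on both sides of $p_1$, $|\nabla g_q|$ is bounded below by a positive constant on all of $\R^n$. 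So all sublevel sets of $g_q$ have the same homotopy type. Likewise, the degree of $\nabla g-q$ on $\partial B_R$ is independent of $q$ for $|q|<2\delta$. It must vanish in any counterexample, since otherwise $B(0,2\delta)\subset G_g$ immediately.
\end{itemize}
Invariants of the individual functions $g_q$ cannot detect a degenerate $C^1$ critical point, which is exactly why the $C^1$ case was open.

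\textbf{The missing ingredient is a quantitative rigidity property of the whole graph} $\Gamma_f=\{(x,\nabla f(x))\}$, which the paper takes from symplectic topology. The graph equals $\psi_f(\R^n\times\{0\})$ for the symplectic homeomorphism $\psi_f(x,p)=(x,p+\nabla f(x))$. The zero section embeds symplectically in $\T^n\times\R^n$ as an open piece of the compact Lagrangian $\T^n\times\{0\}$, and Usher's theorem gives positive displacement energy to every open set meeting that Lagrangian. Usher's invariance under symplectic homeomorphisms then gives $e(U,\Gamma_f)>0$ for every bounded open $U$ meeting $\Gamma_f$ (Lemma~\ref{lem:graph}). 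Now suppose $K=G_f\cap\overline{B(p,r)}$ had empty interior. Lemma~\ref{lem:oscillation} produces a periodic $H$ with $\norm{H}_\infty<\varepsilon$ and $|\nabla H|>2R$ on $K$. The cut-off Hamiltonian $\alpha(x)\beta(p)H(p)$ acts on a fixed neighbourhood $U$ of a graph point as the shear $(x,p)\mapsto(x+\nabla H(p),p)$. This displaces $U$ from $\Gamma_f$ with Hofer length less than $2\varepsilon$ (Lemma~\ref{lem:projection}), and letting $\varepsilon\to0$ contradicts rigidity. Your closing remark that one must exploit the Lagrangian graph rather than a single function points in this direction. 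Without a rigidity input of this kind, together with the small-oscillation, large-gradient function that empty interior makes possible, the key step remains unproved.
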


In particular, if $p\in G_f$ and $(\nabla f)^{-1}(B(p,r))$ is bounded
for some $r>0$, then $p\in\overline{\Int G_f}$.

For $C^2$ functions this assertion also follows from
\cite[Section~3, Theorem~I]{HartmanNirenberg1959}.
Indeed, suppose that $p\in G_f\setminus\overline{\Int G_f}$ and that the inverse
image of a small open ball $B$ about $p$ is bounded. Choose $B$
disjoint from $\Int G_f$, and let $D$ be the component of this inverse
image containing a preimage of $p$.
Continuity gives $\nabla f(\partial D)\subset\partial B$.
The inverse function theorem forces
$\det D^2f=0$ on $D$, and Hartman--Nirenberg gives
$\nabla f(D)\subset\nabla f(\partial D)$, a contradiction.

In terms of optimization, $G_f$ is the set of parameters $p$ for which
the linearly perturbed function $x\mapsto f(x)-p\cdot x$ has a stationary
point. Theorem~\ref{thm:asymptotic} shows that every parameter in
$G_f\setminus A_\infty(f)$ is a limit of parameters around which
stationary points exist for all sufficiently small parameter perturbations.

For a bump, $A_\infty(f)=\{0\}$, and the
origin already belongs to $\Int G_f$. A further application is
the following extension of Theorem~\ref{thm:main}.

\begin{corollary}\label{cor:infinity}
Let $f\in C^1(\R^n)$ be non-affine, and suppose that
$\nabla f(x)\to p_\infty\in\R^n$ as $|x|\to\infty$. Then
\[
\overline{G_f}=\overline{\Int G_f}.
\]
\end{corollary}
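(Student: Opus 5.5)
Here $A_\infty(f)=\{p_\infty\}$, so Theorem~\ref{thm:asymptotic} gives $\overline{G_f}=\overline{\Int G_f}\cup\{p_\infty\}$. Since $\overline{\Int G_f}$ is closed, it suffices to show $p_\infty\in\overline{\Int G_f}$. We may subtract the linear function $p_\infty\cdot x$ from $f$. This translates $G_f$ and $\Int G_f$ by $-p_\infty$ and commutes with closures, so we may assume $p_\infty=0$, i.e.\ $\nabla f(x)\to0$ as $|x|\to\infty$. Non-affineness is preserved, so $\nabla f$ is not constant. Since $0$ is a limit of gradient values, it lies in $\overline{G_f}$, and it suffices to show $\Int G_f$ meets every ball $B(0,\varepsilon)$.

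The key observation is that $G_f$ contains points other than $0$ arbitrarily close to $0$. Indeed, $\nabla f$ is nonconstant and continuous on the connected space $\R^n$, so $G_f$ is connected with more than one point. It also contains gradient values near $0$ because $\nabla f(x)\to0$. Concretely, pick any $x_0$ with $q=\nabla f(x_0)\ne0$ and a sequence $x_k\to\infty$. A path from $x_0$ to $x_k$ maps to a connected set joining $q$ to $\nabla f(x_k)\approx0$. Hence for every small $\varepsilon$ there is $p\in G_f$ with $0<|p|<\varepsilon$. (If instead $0\notin G_f$, then already $G_f\cap B(0,\varepsilon)\ne\emptyset$.)

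Now apply Theorem~\ref{thm:asymptotic} to such a $p$ with $p\ne0$. Since $A_\infty(f)=\{0\}$, the point $p$ lies in $\overline{G_f}\setminus A_\infty(f)$, so $p\in\overline{\Int G_f}$. Hence $\Int G_f$ meets $B(p,\varepsilon-|p|)\subset B(0,\varepsilon)$. Letting $\varepsilon\to0$ gives $0\in\overline{\Int G_f}$, and the corollary follows.

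The only delicate step is producing a nonzero gradient value near $p_\infty$. This is where non-affineness is used: for affine $f$, $G_f$ is a single point with empty interior, so the hypothesis is needed. The connectedness argument handles this step, so I expect no serious obstacle beyond invoking Theorem~\ref{thm:asymptotic}.
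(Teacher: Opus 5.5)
Your proof is correct and follows essentially the same route as the paper. You reduce via Theorem~\ref{thm:asymptotic} to showing $p_\infty\in\overline{\Int G_f}$, use non-affineness and connectedness to get gradient values different from $p_\infty$ arbitrarily close to it, apply Theorem~\ref{thm:asymptotic} to those values, and conclude by closedness. Translating to $p_\infty=0$ and using paths instead of the paper's intermediate value theorem for $x\mapsto|\nabla f(x)-p_\infty|$ are only cosmetic differences.
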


The closure on the left is necessary: for $f(x)=\arctan x$ on $\R$,
the derivative range is $(0,1]$. The non-affine assumption excludes a
singleton gradient range.

Our proof combines an elementary construction with a rigidity theorem
from symplectic topology. A compact set with empty interior admits
smooth functions of arbitrarily small amplitude whose gradients are
arbitrarily large on that set. If a portion of a gradient range had
empty interior and all its preimages stayed bounded, these functions
would generate Hamiltonian displacements of arbitrarily small energy
from the gradient graph. Usher's displacement rigidity results
\cite{Usher2014,Usher2022} exclude this possibility. A cotangent
embedding and invariance under symplectic homeomorphisms extend the
required rigidity to the graph of an arbitrary $C^1$ function.

The same argument applies to projections of compact Lagrangian
submanifolds. On $\R^{2n}=\R_x^n\times\R_p^n$ use the symplectic form
$\omega_0=\sum_{j=1}^n dx_j\wedge dp_j$ and the projection
$\pi_p(x,p)=p$. A smooth $n$-dimensional submanifold is
\emph{Lagrangian} if $\omega_0$ vanishes on each of its tangent spaces.
A \emph{symplectic homeomorphism} is a homeomorphism that is a locally
uniform limit of smooth diffeomorphisms preserving $\omega_0$.

\begin{theorem}\label{thm:lagrangian}
Let $L_0\subset\R^{2n}$ be a compact smooth Lagrangian submanifold
without boundary, and let $\psi:\R^{2n}\to\R^{2n}$ be a symplectic
homeomorphism. Then
\[
\pi_p(\psi(L_0))=\overline{\Int\bigl(\pi_p(\psi(L_0))\bigr)}.
\]
\end{theorem}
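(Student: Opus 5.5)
We argue by contradiction, following the strategy sketched in the introduction. Set $L=\psi(L_0)$ and $K=\pi_p(L)$. Since $L_0$ is compact and $\psi$ is a homeomorphism, $L$ is compact, so $K$ is compact. Suppose $p_0\in K\setminus\overline{\Int K}$. Then there is an open ball $B=B(p_0,r)$ with $B\cap\Int K=\emptyset$, and hence $K\cap\overline{B}$ is a compact set with empty interior. We shall show that $L$ can then be displaced from itself by compactly supported Hamiltonian isotopies of arbitrarily small Hofer energy. This contradicts displacement rigidity for images of closed Lagrangians under symplectic homeomorphisms, which we take from Usher \cite{Usher2014,Usher2022}. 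The rigidity statement we need is that such an image has displacement energy bounded below by a positive constant depending only on $L$. If the cited results are phrased only for smooth Lagrangians, we first transfer the lower bound through the $C^0$-limit defining $\psi$.

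\textbf{Step 1 (small functions with large gradient).} Let $C=K\cap\overline{B(p_0,r/2)}$. This is compact, has empty interior, and contains $p_0$. We construct smooth compactly supported $h_\varepsilon$ on $\R^n_p$, supported in $B$, with $\norm{h_\varepsilon}_\infty\le\varepsilon$ and $|\nabla h_\varepsilon|\ge M$ on $C$, where $M$ exceeds $2\sup_{L}|x|+1$. For the construction, cover $C$ by a fine grid of cubes of side $\delta$, so that at most $O(\delta^{-n})$ cubes meet $C$. Because $C$ has empty interior, every such cube contains a point outside $C$. On each cube we want a sawtooth-like function in a fixed direction with slope about $M$ and amplitude about $M\delta$; this gives amplitude $\varepsilon$ once $\delta\sim\varepsilon/M$. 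A plain sawtooth has critical ridges, and these must be placed off $C$. Arranging this is the delicate point. The approach we would try first works one coordinate at a time. Take $h(p)=\eta(p)\,\phi(p_1)$, where $\phi$ is a $\delta$-periodic-like function whose derivative has modulus at least $M$ except on a set of $p_1$ values we control. The aim is to choose the exceptional $p_1$-slices so that they miss $C$. If no single choice avoids $C$, we iterate through directions and add finitely many such layers with generic phase shifts, using compactness and empty interior to keep $|\nabla h|\ge M$ on $C$. This is the main obstacle.

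\textbf{Step 2 (displacement).} Put $H_\varepsilon(x,p)=h_\varepsilon(p)$. Its Hamiltonian flow is $(x,p)\mapsto(x\pm t\nabla h_\varepsilon(p),p)$, up to sign convention, with $p$ fixed. Cut $H_\varepsilon$ off in $x$ outside a large ball to make it compactly supported; the Hofer norm of the time-one map is at most $2\varepsilon$. Points of $L$ with $p\in C$ are shifted in $x$ by at least $M$, so they leave the region $\pi_x(L)$. Points with $p\notin C$ still present a problem near $\partial B(p_0,r/2)$. To handle them, compose with an additional fixed small translation in $p$, or modify $h_\varepsilon$ to vanish near $K\setminus B$. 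A cleaner alternative is to first displace the part of $L$ over $K\setminus C$ by translating in the $x$-direction using $p$-dependent functions, which has small cost only over $C$. Here we would rely on the freedom to choose $r$, so that the boundary sphere meets $K$ in a set over which the shift is also large. Combining these gives $\phi_\varepsilon(L)\cap L=\emptyset$ with energy $O(\varepsilon)$, contradicting rigidity as $\varepsilon\to0$.
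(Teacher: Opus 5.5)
There is a genuine gap, and it is in Step~2. You try to displace all of $L$ from itself, but your hypothesis only gives information near $p_0$. A Hamiltonian $\alpha(x)h_\varepsilon(p)$ with $h_\varepsilon$ supported in $B=B(p_0,r)$ fixes every point of $L$ lying over $K\setminus B$. Points over $K\cap(\overline B\setminus\overline{B(p_0,r/2)})$, where $\nabla h_\varepsilon$ is not controlled, need not move far either. So $\phi_\varepsilon(L)\cap L\ne\varnothing$ whenever $K\not\subset B$. In particular this happens whenever $\Int K\ne\varnothing$, because $B\cap\Int K=\varnothing$. The assumption $p_0\notin\overline{\Int K}$ says nothing about $K$ away from $p_0$, and $K$ may have interior there, so you have no cheap way to move that part of $L$. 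Your proposed repairs do not help. Making $h_\varepsilon$ vanish near $K\setminus B$ leaves those points fixed. A translation in $p$, or a shear with large gradient over a part of $K$ with interior, has Hofer cost bounded below independently of $\varepsilon$. The small-energy self-displacement is therefore not reachable from your hypothesis.

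The missing idea is to localize using the relative energy $e(U,L)$: displace a small open set $U$ away from $L$, rather than $L$ from itself.
\begin{itemize}
\item Fix $(x_0,p_0)\in L$. Choose $R$ with $L\cap\pi_p^{-1}(\overline{B(p_0,r)})\subset B_R\times\R^n$, and an open $U\ni(x_0,p_0)$ with $\overline U\subset B_R\times B(p_0,r)$.
\item Take $H$ with $\norm{H}_\infty<\varepsilon$ and $|\nabla H|>2R$ on $K\cap\overline{B(p_0,r)}$.
\item The shear fixes $p$. So an image point $(x+\nabla H(p),p)$ of $\overline U$ could lie on $L$ only if $p\in K\cap\overline{B(p_0,r)}$ and both $x$ and $x+\nabla H(p)$ lie in $B_R$, which is impossible.
\item The part of $L$ over the rest of $K$ never enters the argument.
\end{itemize}
The rigidity you need is then $e(U,L)>0$ for every open $U$ meeting $L$ (Lemma~2.4 of \cite{Usher2022}). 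You also need Proposition~1.4 there, which says this property survives symplectic homeomorphisms. A lower bound on the self-displacement energy of $L$ is not the right input.

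Your Step~1 is also unfinished. The paper's device works as follows.
\begin{itemize}
\item Pick $a\in(0,\delta)^n$ with $K\cap(a+\delta\Z^n)=\varnothing$. This is possible because only finitely many of the closed, empty-interior translates $K-\delta m$ meet the cube, so they cannot cover it.
\item Set $h(p)=\lambda\sum_j\dist(p_j,a_j+2\delta\Z)$, and mollify at a scale below $d/(2\sqrt n)$, where $d=\dist(K,a+\delta\Z^n)$.
\end{itemize}
Only the lattice points need to avoid $K$, not the breakpoint hyperplanes. At each $p\in K$ some coordinate $j$ is at distance at least $d/\sqrt n$ from $a_j+\delta\Z$. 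Hence $|\partial_jH(p)|=\lambda$ regardless of the other summands, while $0\le H\le n\lambda\delta$.
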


Section~\ref{sec:projections} proves the auxiliary construction and a
projection lemma, and deduces Theorem~\ref{thm:lagrangian}.
Section~\ref{sec:gradients} establishes the required rigidity of
gradient graphs and proves Theorems~\ref{thm:asymptotic}
and~\ref{thm:main} and Corollary~\ref{cor:infinity}.

We use $|\cdot|$ for the Euclidean norm, $B(p,r)$ for the open ball
with centre $p$ and radius $r$, and $B_r=B(0,r)$. The notation
$\norm{\cdot}_\infty$ denotes the uniform norm on the indicated domain.
The interior and closure of subsets of $\R^n$ are taken in the Euclidean
topology.

\section{Displacement energy and projections}\label{sec:projections}

The first lemma is obtained by smoothing a sum of triangular waves.
We translate the waves so that, at every point of the prescribed set,
at least one summand is differentiable.

\begin{lemma}\label{lem:oscillation}
Let $K\subset\R^n$ be compact with empty interior. For every
$\varepsilon,D>0$, there exists a smooth function $H:\R^n\to\R$,
periodic in each coordinate, such that
\[
\norm{H}_\infty<\varepsilon,
\qquad |\nabla H(p)|>D\quad(p\in K).
\]
\end{lemma}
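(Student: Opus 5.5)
The plan is to take $H$ to be a sum of one-variable smoothed triangular waves, one in each coordinate $p_j$, and to translate all of them simultaneously so that the places where they fail to have full slope stay away from $K$. Because the $j$-th summand depends only on $p_j$, the gradient of the sum is the vector $(h_1'(p_1),\dots,h_n'(p_n))$. So the contributions of different summands cannot cancel, and it is enough that at each point of $K$ at least one coordinate lies away from the kinks of its wave.

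The construction proceeds as follows. Fix a slope $s>D$ and a period $\delta>0$ so small that $n s\delta/4<\varepsilon$. Let $T(t)=s\,\dist(t,\delta\Z)$. This is a triangular wave with $|T'|=s$ away from its kink set $\tfrac{\delta}{2}\Z$, and with $0\le T\le s\delta/2$. For a translation vector $x_0\in\R^n$, to be chosen below, and a mollification radius $r\in(0,\delta/4)$, set $h_j=(T*\rho_r)(\,\cdot-x_{0,j})-s\delta/4$, where $\rho_r$ is a standard even mollifier supported in $[-r,r]$. Each $h_j$ is smooth and $\delta$-periodic, and $|h_j|\le s\delta/4$. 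Whenever $t-x_{0,j}$ lies at distance greater than $r$ from $\tfrac{\delta}{2}\Z$, we have $|h_j'(t)|=s$, because $T'$ is constant $\pm s$ on the support of the mollifier there. Put $H(p)=\sum_j h_j(p_j)$. Then $H$ is smooth and periodic in each coordinate, and $\norm{H}_\infty\le n s\delta/4<\varepsilon$. Moreover $|\nabla H(p)|\ge s>D$ unless every coordinate $p_j-x_{0,j}$ is within $r$ of $\tfrac{\delta}{2}\Z$. That exceptional case forces $\dist\bigl(p,\,x_0+\tfrac{\delta}{2}\Z^n\bigr)\le r\sqrt n$.

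The key step is to choose $x_0$ so that the lattice $\Lambda=x_0+\tfrac{\delta}{2}\Z^n$ misses $K$. This is exactly where the hypothesis that $K$ has empty interior enters. It cannot be dropped: for $n=1$ and $K=[0,1]$, a bound $|H'|>D$ on $K$ together with $\norm H_\infty<\varepsilon$ is impossible once $D>2\varepsilon$. Since $K$ is compact, only finitely many lattice vectors $v_1,\dots,v_N\in\tfrac{\delta}{2}\Z^n$ can satisfy $(x_0+v_i)\in K$ for $x_0$ ranging over the bounded cube $[0,\delta/2]^n$. The set $U=\R^n\setminus K$ is open and dense, so $\bigcap_{i}(U-v_i)$ is a finite intersection of open dense sets. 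It is therefore open and dense, and we can pick $x_0$ in it inside $[0,\delta/2]^n$. Then $\Lambda\cap K=\emptyset$. Since $K$ is compact and $\Lambda$ is closed and discrete, $\eta=\dist(\Lambda,K)>0$.

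Finally choose $r<\min\{\delta/4,\ \eta/\sqrt n\}$. Every $p\in K$ then has $\dist(p,\Lambda)\ge\eta>r\sqrt n$, so some coordinate of $p$ is more than $r$ from the kinks of the corresponding wave, and hence $|\nabla H(p)|\ge s>D$. The only delicate point is the ordering of the choices. First fix $s$ and $\delta$, which control the amplitude. Then choose $x_0$, which depends only on $K$ and $\delta$. Only after that fix $r$, which depends on $\eta$. The mollification affects neither the amplitude bound nor the exact slope away from the $r$-neighbourhood of the kinks, so there is no circularity.
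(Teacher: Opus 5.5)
Your proof is correct and is essentially the paper's argument. You use a mollified sum of coordinatewise triangular waves, translate the breakpoint lattice off $K$ by a finite open-dense (equivalently, finite closed-nowhere-dense) argument, and take the mollification radius below $\dist(K,\Lambda)/\sqrt n$ so that some coordinate keeps exact slope $\pm s$. The differences are only cosmetic: you mollify each one-variable wave separately and recentre the waves to halve the amplitude bound.
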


\begin{proof}
The assertion is immediate if $K=\varnothing$, so assume that $K$ is
nonempty. Choose $\lambda>D$ and $\delta>0$ such that
$n\lambda\delta<\varepsilon$.

There exists $a\in(0,\delta)^n$ for which
\[
K\cap(a+\delta\Z^n)=\varnothing.
\]
Indeed, the forbidden translations in this cube belong to the sets
$K-\delta m$, $m\in\Z^n$. Only finitely many of these sets meet the cube,
because $K$ is bounded. Each is closed with empty interior, so their finite
union does not cover the cube. Since the translated lattice is closed,
compactness gives
\[
d=\dist(K,a+\delta\Z^n)>0.
\]

Consider the periodic Lipschitz function
\[
h(p)=\lambda\sum_{j=1}^n\dist(p_j,a_j+2\delta\Z).
\]
It satisfies $0\le h\le n\lambda\delta$. Each summand is a triangular wave,
whose derivative is $\lambda$ or $-\lambda$ except at its breakpoints
$a_j+\delta\Z$.

Let $H=h*\varphi$, where $\varphi$ is a nonnegative smooth mollifier of
integral one supported in $B_\tau$, with $\tau<d/(2\sqrt n)$.
Fix $p\in K$. The distance to the product lattice satisfies
\[
\sum_{j=1}^n\dist(p_j,a_j+\delta\Z)^2
=\dist(p,a+\delta\Z^n)^2\ge d^2.
\]
Hence some coordinate $j$ has distance at least $d/\sqrt n$ from its
breakpoints. Throughout $B(p,\tau)$, the
$j$th partial derivative of $h$ is therefore one fixed number, either
$\lambda$ or $-\lambda$. Consequently
$|\partial_jH(p)|=\lambda>D$. Finally,
$0\le H\le n\lambda\delta<\varepsilon$, and $H$ is smooth and
$2\delta$-periodic in each coordinate. All its derivatives are bounded.
\end{proof}

We next recall the displacement energy used in the argument. The two
ambient spaces we need are $\R^{2n}$ and
$\T^n\times\R^n$, where $\T^n=\R^n/\Z^n$. The latter is the
cotangent bundle of the flat torus, with coordinates $(\theta,\eta)$
and symplectic form $\sum_j d\theta_j\wedge d\eta_j$.
Lagrangian submanifolds in this space are defined using this form.
For either space $M$, a smooth Hamiltonian
$F:[0,1]\times M\to\R$ with compact support generates a flow
$\phi_F^t$. In Euclidean coordinates our convention is
\[
\dot x=\nabla_pF(t,x,p),\qquad
\dot p=-\nabla_xF(t,x,p).
\]
Its Hofer length is
\[
\mathcal L(F)=\int_0^1
\left(\max_M F(t,\cdot)-\min_M F(t,\cdot)\right)\,dt.
\]
For an open set $U\subset M$ and a closed set $N\subset M$, define
\[
e_M(U,N)=\inf\{\mathcal L(F):\phi_F^1(\overline U)\cap N=\varnothing\},
\]
where the infimum is over these Hamiltonians and is $+\infty$ if the
class is empty. We omit the subscript when $M=\R^{2n}$.

Taking the infimum over Hamiltonians generating each time-one map
recovers the equivalent definition in terms of Hofer norms used by Usher.

We use two known facts. Both ambient spaces are geometrically bounded:
their standard compatible complex structures and complete flat product
metrics have uniform compatibility constants, zero curvature, and
positive injectivity radius. First, for either of these ambient spaces,
the result of Usher \cite[Lemma~2.4]{Usher2022}, based on
\cite[Corollary~4.10]{Usher2014}, applies to every compact smooth
Lagrangian submanifold $L$ without boundary. For every open set $U$
meeting $L$, it gives\footnote{In the auxiliary product-torus construction
in the proof of \cite[Corollary~4.10]{Usher2014}, the radius of each circle
can be taken to be $r/(2\sqrt n)$ in place of $r/2$. The product torus then
lies at Euclidean distance $r/2$ from the origin, inside the required
$2n$-dimensional ball of radius $r$, and retains the stated Lagrangian
and transverse-intersection properties.}
\begin{equation}\label{eq:rigidity}
e_M(U,L)>0.
\end{equation}
Second, \cite[Proposition~1.4]{Usher2022} shows that the property
$e(U,N)>0$ for every open set $U$ meeting a closed set $N$ is preserved
by symplectic homeomorphisms.

The next lemma requires compactness only over a neighborhood of the
specified projection value.

\begin{lemma}\label{lem:projection}
Let $N\subset\R^{2n}$ be closed, and suppose that $e(U,N)>0$ for
every bounded open set $U$ meeting $N$. Let $p_0\in\pi_p(N)$.
If $N\cap\pi_p^{-1}(\overline{B(p_0,r_0)})$ is compact for some
$r_0>0$, then
\[
p_0\in\overline{\Int\bigl(\pi_p(N)\bigr)}.
\]
\end{lemma}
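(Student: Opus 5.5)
Argue by contradiction. Suppose $p_0\notin\overline{\Int(\pi_p(N))}$. Then there is $r\in(0,r_0)$ with $B(p_0,r)\cap\Int(\pi_p(N))=\varnothing$. Set
\[
K=\pi_p(N)\cap\overline{B(p_0,r/2)}=\pi_p\bigl(N\cap\pi_p^{-1}(\overline{B(p_0,r/2)})\bigr).
\]
This is the continuous image of a closed subset of the compact set $N\cap\pi_p^{-1}(\overline{B(p_0,r_0)})$, so it is compact. It has empty interior: any open set contained in $K$ lies in $\pi_p(N)$, hence in $\Int(\pi_p(N))$, and also in $B(p_0,r)$, which is impossible. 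By compactness there is $C>0$ with $|x|\le C$ for every $(x,p)\in N$ with $|p-p_0|\le r_0$.

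Fix $x_0$ with $(x_0,p_0)\in N$ and let $U=B((x_0,p_0),\rho)$ with $\rho<r/2$. Then $U$ is a bounded open set meeting $N$, so by hypothesis $e(U,N)>0$. We will show $e(U,N)\le\varepsilon$ for every $\varepsilon>0$, which is the contradiction.

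Given $\varepsilon$, apply Lemma~\ref{lem:oscillation} to $K$ with $D=2C+|x_0|+1$. This gives a smooth periodic $H$ with $0\le H<\varepsilon$ (the proof yields $H\ge0$) and $|\nabla H|>D$ on $K$. The autonomous Hamiltonian $F(x,p)=H(p)$ generates $\phi^t(x,p)=(x+t\nabla H(p),p)$. Take $(x,p)\in\overline U$ and suppose $\phi^1(x,p)=(x+\nabla H(p),p)\in N$. Then $p\in\pi_p(N)$ and $|p-p_0|\le\rho<r/2$, so $p\in K$. Hence $|x+\nabla H(p)|\ge D-|x|\ge D-|x_0|-1>C$, which contradicts the bound $|x|\le C$ on points of $N$ over $\overline{B(p_0,r_0)}$. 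Therefore $\phi^1(\overline U)\cap N=\varnothing$.

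It remains to make the Hamiltonian compactly supported. Since $\nabla H$ is bounded, say by $S$ (chosen after $H$), all trajectories from $\overline U$ for $t\in[0,1]$ stay in the compact set
\[
Q=\{|x|\le|x_0|+1+S,\ |p-p_0|\le\rho\}.
\]
Let $\chi\in C_c^\infty(\R^{2n})$ take values in $[0,1]$ and equal $1$ on a neighbourhood of $Q$, and set $\tilde F=\chi H$. Its flow agrees with that of $F$ on these trajectories, so $\phi^1_{\tilde F}(\overline U)\cap N=\varnothing$. Because $0\le\tilde F\le H<\varepsilon$, we get $\mathcal L(\tilde F)<\varepsilon$. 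Hence $e(U,N)<\varepsilon$ for every $\varepsilon$, contradicting $e(U,N)>0$.

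The main point to get right is that the displacement uses only points of $N$ whose momentum lies in $K$. Two things make this work: the radius $\rho<r/2$ confines momenta to $K$, and compactness of $N$ over $\overline{B(p_0,r_0)}$ supplies the bound $C$, so a shift of size $D$ suffices. The cutoff is then routine because $S$ is fixed before $\chi$ is chosen.
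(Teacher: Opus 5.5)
Your proof is correct and follows essentially the same route as the paper: a contradiction setup with a compact, empty-interior set $K$ of momenta, the oscillation lemma giving a small $H$ with large gradient on $K$, and a cut-off autonomous shear $(x,p)\mapsto(x+\nabla H(p),p)$ that pushes $\overline U$ off $N$ because $N$ has bounded $x$-coordinates over $K$. The only slip is that your estimate $|x|\le|x_0|+1$ on $\overline U$, used both in the choice of $D$ and in defining $Q$, requires $\rho\le1$, so you should take $\rho<\min(r/2,1)$.
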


\begin{proof}
Set $E=\pi_p(N)$. Suppose that $p_0\notin\overline{\Int E}$.
Choose $0<r<r_0$ such that $B(p_0,2r)\cap\Int E=\varnothing$.
The set
\[
K=E\cap\overline{B(p_0,r)}
\]
is compact, since it is the projection of the compact set
$N\cap\pi_p^{-1}(\overline{B(p_0,r)})$. It has empty interior.
Choose $(x_0,p_0)\in N$, and choose $R>0$ such that
\[
N\cap\pi_p^{-1}(\overline{B(p_0,r)})\subset B_R\times\R^n.
\]
Fix an open neighborhood $U$ of $(x_0,p_0)$ with
$\overline U\subset B_R\times B(p_0,r)$.

Given $\varepsilon>0$, Lemma~\ref{lem:oscillation} gives a smooth
function $H$, periodic in each coordinate, such that
\[
\norm{H}_\infty<\varepsilon,
\qquad |\nabla H(p)|>2R\quad(p\in K).
\]
Put $A=\norm{\nabla H}_\infty$. Choose compactly supported smooth
functions $\alpha,\beta:\R^n\to[0,1]$ that equal one on neighborhoods
of $\overline{B_{R+A}}$ and $\overline{B(p_0,r)}$, respectively, and set
\[
F(x,p)=\alpha(x)\beta(p)H(p).
\]
This autonomous Hamiltonian has $\mathcal L(F)<2\varepsilon$.
For $(x,p)\in\overline U$, the entire path
$(x+t\nabla H(p),p)$, $0\le t\le1$, lies where both cutoffs equal one.
It therefore solves Hamilton's equations for $F$, and
\begin{equation}\label{eq:shear}
\phi_F^1(x,p)=(x+\nabla H(p),p)
\qquad((x,p)\in\overline U).
\end{equation}
If the endpoint in \eqref{eq:shear} belonged to $N$, then $p\in K$
and its first coordinate would lie in $B_R$. Since $x\in B_R$ as
well, this would give $|\nabla H(p)|<2R$, a contradiction. Hence
$\phi_F^1(\overline U)\cap N=\varnothing$ and
$e(U,N)<2\varepsilon$. The neighborhood $U$ is independent of
$\varepsilon$, so this contradicts $e(U,N)>0$.
\end{proof}

We can now prove Theorem~\ref{thm:lagrangian}.

\begin{proof}
Set $L=\psi(L_0)$. By \eqref{eq:rigidity} and its invariance under
symplectic homeomorphisms, $e(U,L)>0$ whenever $U$ meets $L$.
Compactness of $L$ allows us to apply Lemma~\ref{lem:projection} at
every point of $\pi_p(L)$. This gives
$\pi_p(L)\subset\overline{\Int\pi_p(L)}$.
The reverse inclusion follows because $\pi_p(L)$ is compact.
\end{proof}

\section{Gradient graphs}\label{sec:gradients}

To apply Lemma~\ref{lem:projection} to a gradient range, we need
every point of the gradient graph to be rigid with respect to that
graph, in the sense of \cite[Definition~1.1]{Usher2022}.
Local rigidity would not suffice, since the Hamiltonians in that lemma
need not be supported in a fixed small neighborhood. We obtain the
required statement by embedding $\R^{2n}$ into the cotangent bundle
of a torus.

\begin{lemma}\label{lem:graph}
Let $f\in C^1(\R^n)$, and set
\[
\Gamma_f=\{(x,\nabla f(x)):x\in\R^n\}.
\]
For every bounded open set $U\subset\R^{2n}$ meeting $\Gamma_f$,
we have $e(U,\Gamma_f)>0$.
\end{lemma}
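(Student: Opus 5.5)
The plan is to argue by contradiction and to move the problem into a cotangent bundle of a large torus. There the graph of a suitably modified gradient becomes a compact Lagrangian-type object, obtained from the zero section by a symplectic homeomorphism. Suppose $U\subset\R^{2n}$ is bounded, open, meets $\Gamma_f$, and $e(U,\Gamma_f)=0$. Fix any $\varepsilon>0$ and a compactly supported Hamiltonian $F$ with $\mathcal L(F)<\varepsilon$ and $\phi_F^1(\overline U)\cap\Gamma_f=\varnothing$. Choose $S>0$ such that $\overline U$ and $\supp F$ lie in $B_S\times\R^n$. Each trajectory of the flow starting in $\overline U$ either stays fixed or runs inside $\supp F$. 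Hence $\phi_F^1(\overline U)\subset B_S\times\R^n$, and only the part of $\Gamma_f$ over $B_S$ matters. Take a smooth cutoff $\chi$ equal to one on a neighborhood of $\overline{B_S}$ and supported in $B_{2S}$, and set $\tilde f=\chi f\in C^1_c(\R^n)$. Then $\Gamma_{\tilde f}$ coincides with $\Gamma_f$ over $B_S$. Consequently $\phi_F^1(\overline U)\cap\Gamma_{\tilde f}=\varnothing$, and $U$ still meets $\Gamma_{\tilde f}$. This reduction does not depend on $\varepsilon$ except through $S$; to get a contradiction, I would fix $S$ (and hence $\tilde f$) from one displacing Hamiltonian and then use the uniform positive lower bound obtained below.

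Next I pass to the torus. Choose $T>4S$ and rescale by the conformally symplectic map $(x,p)\mapsto(x/T,p)$, which multiplies $\omega_0$ by $1/T$. Under this map, Hofer lengths scale by the fixed factor $1/T$, so positivity of energies is unaffected. The region $B_{2S}\times\R^n$ then embeds symplectically into $M=\T^n\times\R^n$ via the quotient map, since $B_{2S/T}$ injects into $\T^n$. The compactly supported $\tilde f$ descends to a $C^1$ function $g$ on $\T^n$. The pushed-forward Hamiltonian is extended by zero; it is smooth and compactly supported, its flow is the conjugate of $\phi_F^t$, and its Hofer length is $\mathcal L(F)/T$. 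It displaces the image $U'$ of $U$ from the graph $\Gamma_g=\{(\theta,dg(\theta))\}\subset M$, because the only points of $\Gamma_g$ that the image of $\phi_F^1(\overline U)$ could hit lie over the injectively embedded region, where $\Gamma_g$ is the image of $\Gamma_{\tilde f}$. Thus $e_M(U',\Gamma_g)\le \mathcal L(F)/T<\varepsilon/T$. Since $\varepsilon$ is arbitrary, this contradicts positivity, provided we know $e_M(V,\Gamma_g)>0$ for every open $V$ meeting $\Gamma_g$.

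It remains to prove that positivity, which is the main obstacle. The shear $\Psi(\theta,\eta)=(\theta,\eta+dg(\theta))$ is a homeomorphism of $M$ carrying the zero section $0_{\T^n}$ onto $\Gamma_g$. Take smooth $g_k\to g$ in $C^1(\T^n)$, for instance by convolution. The corresponding shears $\Psi_k$ are smooth symplectomorphisms, being time-one maps of the Hamiltonians $-g_k(\theta)$. They converge to $\Psi$ uniformly, together with their inverses, so $\Psi$ is a symplectic homeomorphism of $M$. The zero section is a compact smooth Lagrangian without boundary, so \eqref{eq:rigidity} gives $e_M(V,0_{\T^n})>0$ for every open $V$ meeting it. The delicate point is transferring this through $\Psi$: the invariance result \cite[Proposition~1.4]{Usher2022} is quoted above for $\R^{2n}$. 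I would check that its proof (which uses only geometric boundedness and $C^0$-approximation by symplectomorphisms) applies verbatim on $M$, which is geometrically bounded as noted. If it does, then $e_M(V,\Gamma_g)>0$ for all open $V$ meeting $\Gamma_g$, which completes the contradiction. If that transfer proves awkward, the fallback is to compose directly: conjugate the displacing Hamiltonian by $\Psi_k^{-1}$. This preserves Hofer length and nearly displaces a fixed neighborhood from $\Psi_k^{-1}(\Gamma_g)$, which is $C^0$-close to the zero section. One then argues as in Usher's proof with a slightly smaller neighborhood.
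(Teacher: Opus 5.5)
There is a genuine gap: every ingredient of your final lower bound depends on the displacing Hamiltonian, so the contradiction does not close. You choose $S$ so that $\overline U$ and $\supp F$ lie over $B_S$. Then $\tilde f$, $T$, $g$ and the rescaled set $U'$ all depend on $F$. What you actually prove is $\mathcal L(F)\ge T\,e_M(U',\Gamma_g)$ for each single $F$, with the right-hand side depending on $F$. The hypothesis $e(U,\Gamma_f)=0$ only supplies Hamiltonians $F_k$ with $\mathcal L(F_k)\to0$, and their supports may run off to infinity in the $x$-directions. Nothing then prevents $T_k\,e_M(U'_k,\Gamma_{g_k})\to0$.

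Fixing $S$ from one Hamiltonian does not repair this. A later $F_k$ whose support is not contained over $B_S$ cannot be pushed into $M$ by your rescaled quotient map, because its support no longer lies in a region that injects into $\T^n$. It also need not displace $\overline U$ from $\Gamma_{\tilde f}$, since $\Gamma_{\tilde f}$ need not agree with $\Gamma_f$ outside $B_S$. This is exactly the difference between local rigidity and the global rigidity that Lemma~\ref{lem:projection} requires, which the paper points out just before the lemma.

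The missing idea is a single embedding of all of $\R^{2n}$, independent of $F$, with the shear applied afterwards in $\R^{2n}$. The paper takes an increasing diffeomorphism $a:\R\to(-1/4,1/4)$ and sets $\iota(x,p)=\bigl((a(x_j))_j,(p_j/a'(x_j))_j\bigr)$. This is a symplectic diffeomorphism onto $(-1/4,1/4)^n\times\R^n\subset\T^n\times\R^n$ sending $Z=\R^n\times\{0\}$ into the zero section $L$. Every compactly supported $F$ on $\R^{2n}$, wherever its support lies, transfers by extension by zero with the same Hofer length. Hence $e(U,Z)\ge e_M(\iota(U),L)>0$ with a bound that does not depend on $F$.

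Only then is $f$ itself brought in, uncut, through the symplectic homeomorphism $\psi_f(x,p)=(x,p+\nabla f(x))$ of $\R^{2n}$. This map carries $Z$ onto $\Gamma_f$, and \cite[Proposition~1.4]{Usher2022} is applied on $\R^{2n}$. The order matters: $\iota(\Gamma_f)$ is noncompact, not a compact Lagrangian in $M$, so the rigidity input \eqref{eq:rigidity} is available only for the zero section. This route also never needs the invariance result on $\T^n\times\R^n$, which you correctly flagged as unverified.
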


\begin{proof}
First consider the zero section $Z=\R^n\times\{0\}$.
Choose a smooth increasing diffeomorphism
$a:\R\to(-1/4,1/4)$ with $a'>0$, and define
\[
\iota:\R^{2n}\longrightarrow M=\T^n\times\R^n,
\qquad
\iota(x,p)=\left((a(x_j))_{j=1}^n,
\left(\frac{p_j}{a'(x_j)}\right)_{j=1}^n\right).
\]
The first coordinates are read modulo $\Z^n$.
A direct calculation gives
$\iota^*(\sum_j d\theta_j\wedge d\eta_j)=\omega_0$.
Thus $\iota$ is a symplectic diffeomorphism onto the open set
$V=(-1/4,1/4)^n\times\R^n\subset M$, and
$\iota(Z)=V\cap L$, where $L=\T^n\times\{0\}$ is a compact
smooth Lagrangian submanifold.

The quotient map is injective on $[-1/4,1/4]^n$, since distinct points
of this cube cannot differ by an integer vector.
Figure~\ref{fig:torus-chart} shows this coordinate patch for $n=2$.

\begin{figure}[htbp]
\centering
\includegraphics[width=\textwidth]{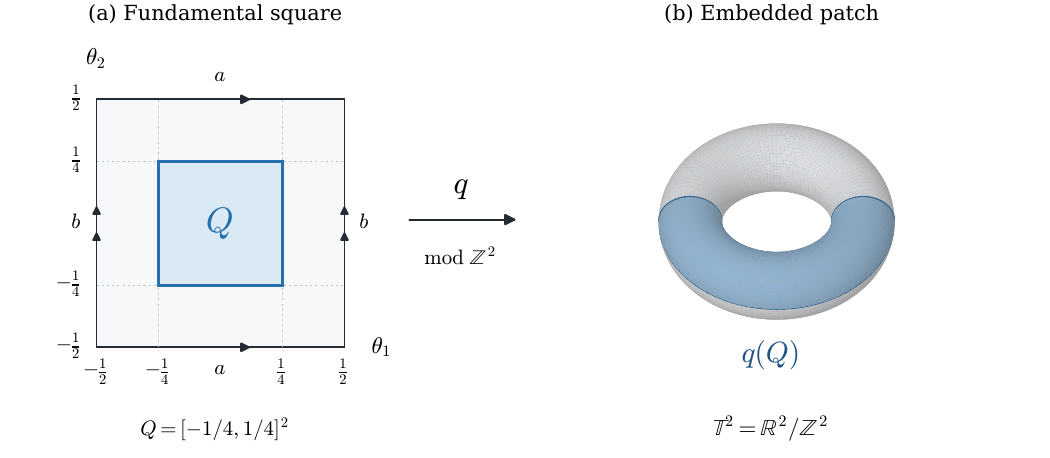}
\caption{The quotient map $q:\R^2\to\T^2=\R^2/\Z^2$ identifies
opposite sides of the outer square $[-1/2,1/2]^2$ with the indicated
orientations. It embeds the blue square $Q=[-1/4,1/4]^2$ into the
torus; no boundary points of $Q$ are identified. The proof uses the
open coordinate patch $q(\Int Q)$. Only the base torus is shown;
cotangent fibers are omitted.}
\label{fig:torus-chart}
\end{figure}

Let $U\subset\R^{2n}$ be bounded and open with $U\cap Z\ne\varnothing$.
Writing the superscript $M$ for closure in the ambient manifold,
compactness of $\overline U$ gives
\[
\overline{\iota(U)}^{\,M}=\iota(\overline U).
\]

Let $F:[0,1]\times\R^{2n}\to\R$ be any smooth compactly supported
Hamiltonian satisfying $\phi_F^1(\overline U)\cap Z=\varnothing$.
Choose a compact set $K_F\subset\R^{2n}$ containing the support of
$F(t,\cdot)$ for every $t\in[0,1]$, and define
\[
\widehat F(t,y)=
\begin{cases}
F(t,\iota^{-1}(y)),&y\in V,\\
0,&y\in M\setminus V.
\end{cases}
\]
Since $\iota(K_F)$ is a compact subset of $V$, extension by zero gives
a smooth compactly supported Hamiltonian on $M$. Its maximum and
minimum at each time agree with those of $F$: the extension adds
only the value zero, already attained outside $K_F$. Hence
$\mathcal L(\widehat F)=\mathcal L(F)$.

The flow of $\widehat F$ fixes $M\setminus V$ pointwise and therefore
preserves $V$. Symplecticity of $\iota$ and uniqueness of the flow give
\[
\phi_{\widehat F}^t\circ\iota
=\iota\circ\phi_F^t,\qquad 0\le t\le1.
\]
The displaced set remains in $V$, where $L\cap V=\iota(Z)$.
Consequently,
\[
\phi_{\widehat F}^1
  \bigl(\overline{\iota(U)}^{\,M}\bigr)
=\iota\bigl(\phi_F^1(\overline U)\bigr)\subset V\setminus L.
\]

Taking the infimum over all such $F$ gives
\[
e(U,Z)\ge e_M(\iota(U),L)>0.
\]
The strict inequality follows from \eqref{eq:rigidity}, since
$\iota(U)$ is open in $M$ and meets the compact Lagrangian $L$.
Finally, every open set $W$ meeting $Z$ contains a bounded open set
$U$ meeting $Z$. A displacement of $\overline W$ from $Z$ also
displaces $\overline U$, and hence $e(W,Z)\ge e(U,Z)>0$.

Now consider the homeomorphism
\[
\psi_f(x,p)=(x,p+\nabla f(x)),
\qquad
\psi_f^{-1}(x,p)=(x,p-\nabla f(x)).
\]
Smooth mollifications $f_\nu$ converge to $f$ in $C^1$ on compact sets.
The maps $\psi_{f_\nu}$ are symplectic diffeomorphisms by symmetry of
the Hessians and converge locally uniformly to $\psi_f$.
Thus $\psi_f$ is a symplectic homeomorphism. The graph $\Gamma_f$ is
closed by continuity of $\nabla f$. Since $\Gamma_f=\psi_f(Z)$,
Usher's invariance result \cite[Proposition~1.4]{Usher2022} gives
$e(U,\Gamma_f)>0$ for every bounded open set $U$ meeting $\Gamma_f$.
\end{proof}

We first prove the assertion about asymptotic values in
Theorem~\ref{thm:asymptotic}.

\begin{proof}
Let $p_0\in G_f\setminus A_\infty(f)$. By \eqref{eq:asymptotic},
there are $r,R>0$ such that
\[
|\nabla f(x)-p_0|>r\qquad(|x|\ge R).
\]
Hence the closed set
$\Gamma_f\cap\pi_p^{-1}(\overline{B(p_0,r)})$ is bounded and
therefore compact. By Lemma~\ref{lem:graph}, $e(U,\Gamma_f)>0$ for
every bounded open set $U\subset\R^{2n}$ meeting $\Gamma_f$.
We may thus apply Lemma~\ref{lem:projection} with $N=\Gamma_f$.
Since $\pi_p(\Gamma_f)=G_f$, we obtain
$p_0\in\overline{\Int G_f}$.
Thus $G_f\subset\overline{\Int G_f}\cup A_\infty(f)$.
The right-hand side is closed and is contained in $\overline{G_f}$,
so taking closures proves the stated equality.
\end{proof}

For completeness, we give the elementary argument at the origin that
completes the proof of Theorem~\ref{thm:main}.

\begin{proof}
For a bump $f$, the set $G_f$ is compact and $A_\infty(f)=\{0\}$.
It remains, by Theorem~\ref{thm:asymptotic}, to show that
$0\in\Int G_f$. After replacing $f$ by $-f$ if necessary, choose
$x_*$ with $f(x_*)>0$, and let $B_R$ contain $\supp f$ in its
interior. For every sufficiently small $q\in\R^n$, the value of
$f(x)-q\cdot x$ at $x_*$ exceeds all its values on $\partial B_R$.
Its maximum on $\overline{B_R}$ is therefore attained in the interior,
where $\nabla f=q$. This proves the required inclusion of a ball
about zero in $G_f$.
\end{proof}

\begin{samepage}
Finally, we prove Corollary~\ref{cor:infinity}.

\begin{proof}
Here $A_\infty(f)=\{p_\infty\}$. By Theorem~\ref{thm:asymptotic},
it suffices to show that $p_\infty\in\overline{\Int G_f}$.
Because $f$ is non-affine, the continuous function
$x\mapsto|\nabla f(x)-p_\infty|$ takes some value $a>0$ and tends
to zero at infinity. The intermediate value theorem therefore gives
a sequence $(y_k)$ with
\[
y_k\in G_f\setminus\{p_\infty\},
\qquad |y_k-p_\infty|=\frac{a}{k+1}\longrightarrow0.
\]
By Theorem~\ref{thm:asymptotic}, every $y_k$ belongs to
$\overline{\Int G_f}$. Since this set is closed and $y_k\to p_\infty$,
it also contains $p_\infty$, as required.
\end{proof}
\end{samepage}

\bigskip
\begin{samepage}
\footnotesize
\noindent\textsc{Department of Information, Risk, and Operations Management,
McCombs School of Business,\\
The University of Texas at Austin,
Austin, TX 78712, USA}\par
\nopagebreak
\noindent\textit{Email address:}
\href{mailto:tao_hu@utexas.edu}{\texttt{tao\_hu@utexas.edu}}\par
\end{samepage}

\end{document}